\theoremstyle{plain}
\newtheorem{theorem}{Theorem}[section]
\newtheorem{lemma}[theorem]{Lemma}
\newtheorem{conjecture}{Conjecture}[section]
\theoremstyle{definition}
\newtheorem{definition}{Definition}[section]
\newtheorem{assumption}[theorem]{Assumption}
\newcommand{\IQ}{\mathbb{Q}}
\newcommand{\IZ}{\mathbb{Z}}
\newcommand{\IR}{\mathbb{R}}
\newcommand{\IP}{\mathbb{P}}
\newcommand{\SigmaFan}{\Sigma}
\newcommand{\Nlat}{N}
\newcommand{\Mlat}{M}
\newcommand{\Star}{\mathrm{Star}}
\newcommand{\IHcomb}{IH_{\mathrm{comb}}}
\newcommand{\Hdg}{\mathrm{Hdg}}
\newcommand{\Chowh}[1]{A_{#1}}
\newcommand{\Chowop}[1]{A^{#1}_{\mathrm{op}}}
\begin{document}
	
	\title[Combinatorial Cycle Classes]{Combinatorial Cycle Classes in the Intersection Cohomology of Projective Toric Varieties}
	\author{Rizwan Jahangir}
	\address{Kiara Inc.\ Tokyo, Japan, and NUST Business School, NUST H-12 Campus, Off Srinagar Highway, Islamabad 44000, Pakistan}
	\email{rizwan@kiara.team, rizwan.jahangir@nbs.nust.edu.pk}
	
	\author{Daisuke Ishii}
	\address{Kiara Inc.\ Tokyo, Japan}
	\email{dai@kiara.team}
	
	\subjclass[2020]{Primary 14M25, 14C30; Secondary 52B20, 14F43}
	\keywords{toric varieties, intersection cohomology, cycle classes, fans, combinatorial intersection cohomology}
	
	\begin{abstract}
		We investigate cycle-class realizations inside the combinatorial intersection cohomology for fans developed by Barthel, Brasselet, Fieseler, and Kaup (BBFK). For projective toric varieties, the intersection cohomology is Hodge-Tate, and thus the space of rational Hodge classes coincides with the full rational even-degree intersection cohomology. We formulate a compatibility statement between combinatorial and geometric cycle classes and explore it in the torus-invariant setting under standard functoriality assumptions. The central question we address is whether these invariant combinatorial cycle classes span the even-degree combinatorial intersection cohomology $IH^{2k}_{\mathrm{comb}}(\Sigma, \mathbb{Q})$. Assuming the stated BBFK--BL compatibility, we verify this linear-generation statement for projective toric varieties of dimension at most $3$; the simplicial case follows unconditionally from standard rational cohomology descriptions. We illustrate the framework with a non-simplicial example in dimension $3$ for which the Betti numbers and spanning property are derived directly from Stanley's toric $h$-vector formula and Fieseler's surjectivity theorem.
	\end{abstract}
	
	\maketitle
	
	\section{Introduction}
	
	The intersection cohomology $IH^*(X)$, introduced by Goresky and MacPherson \cite{GM80}, provides the appropriate topological invariant for singular complex projective varieties, restoring Poincar\'e duality and carrying a pure Hodge structure \cite{Saito90}. For a projective variety $X$, the Intersection Hodge Conjecture posits that the cycle class map from algebraic cycles to intersection cohomology is surjective onto the space of rational Hodge classes.
	
	Toric varieties offer a highly structured environment where geometric properties translate into the combinatorics of fans. For smooth toric varieties, the cohomology ring is generated by the classes of torus-invariant divisors, and the Hodge Conjecture holds trivially \cite{Danilov78}. In the singular setting, a purely combinatorial intersection cohomology for fans, $\IHcomb^*(\SigmaFan)$, was developed by Barthel, Brasselet, Fieseler, and Kaup \cite{BBFK} and independently by Bressler and Lunts \cite{BL}. This theory constructs a graded vector space associated to a fan $\SigmaFan$ which is isomorphic to the intersection cohomology of the corresponding toric variety $X_\SigmaFan$.
	
	As established by Fieseler \cite{Fieseler91}, the intersection cohomology of a projective toric variety is Hodge-Tate. Consequently, the Hodge classes in even degrees coincide with the full rational even-degree intersection cohomology:
	\[
	\Hdg^k(X, \IQ) = IH^{2k}(X, \IQ).
	\]
	In this toric setting, the Hodge-class aspect reduces to a linear-generation statement: \emph{Do the proposed invariant combinatorial cycle classes span the rational even-degree intersection cohomology?}
	
	While the literature contains profound results on toric intersection theory---such as Karu's Hard Lefschetz theorem for nonrational polytopes \cite{Karu}, the identification of operational Chow cohomology with Minkowski weights by Fulton and Sturmfels \cite{FS97}, and the sheaf-theoretic approaches of Braden and MacPherson \cite{BM01}---a direct, explicit construction of cycle classes within the BBFK framework and a systematic study of their spanning properties have remained less explored.
	
	The purpose of this note is not to construct the full functorial formalism of BBFK sheaves, but to isolate the resulting linear-generation question for torus-invariant cycle classes and verify it in low dimensions. Under the stated functoriality assumptions, we obtain a canonical rational map from torus-invariant algebraic cycles into the BBFK combinatorial intersection cohomology, formulate a compatibility statement with geometric cycle classes, and investigate whether these classes span $\IHcomb^{2k}(\SigmaFan, \IQ)$. We provide proofs of this generation property for dimensions up to~$3$ and for simplicial fans, and illustrate the theory with a non-simplicial example in dimension~$3$, carefully separating the roles of Chow homology divisor cycles and operational Chow cohomology.
	
	\section{Preliminaries}
	
	\subsection{Intersection Cohomology and Cycle Maps}
	
	Let $X$ be a complex projective variety of dimension $n$. The intersection cohomology groups $IH^k(X, \IQ)$ carry a pure Hodge structure of weight $k$ \cite{Saito90}. The space of Hodge classes of codimension $p$ is $\Hdg^p(X, \IQ) \coloneqq IH^{2p}(X, \IQ) \cap IH^{p,p}(X)$. 
	
	For singular varieties, the construction of the cycle class map
	\[
	cl_{IH}: \Chowh{k}(X)_\IQ \to IH^{2n-2k}(X, \IQ)
	\]
	from the Chow homology group $\Chowh{k}(X)_\IQ$ of $k$-dimensional cycles to intersection cohomology is a subtle issue. We rely on the established constructions of cycle maps into intersection homology and cohomology for singular varieties, as developed in the framework of perverse sheaves and Borel-Moore homology (see \cite{BBD82, FM81_cycle}). For toric varieties, the theory of Minkowski weights provides a robust framework for operational Chow cohomology and its relation to intersection cohomology \cite{FS97}.
	
	\subsection{Toric Varieties and Fans}
	
	We adopt the notation of Fulton \cite{Fulton} and Cox-Little-Schenck \cite{CLS}. Let $\Nlat \cong \IZ^n$ be a lattice and $\Mlat = \mathrm{Hom}(\Nlat, \IZ)$ be its dual. A fan $\SigmaFan$ in $\Nlat_\IR$ is a collection of strongly convex rational polyhedral cones. For each cone $\tau \in \SigmaFan$ of dimension $k$, there is a corresponding closed torus-invariant subvariety $V(\tau)$ of dimension $n-k$ (codimension $k$). 
	
	\subsection*{Standing Assumptions}
	Throughout this paper:
	\begin{enumerate}
		\item All fans $\SigmaFan$ are rational, complete, and polytopal (i.e., $X_\SigmaFan$ is projective).
		\item We work over $\mathbb{C}$ with coefficients in $\IQ$.
		\item The canonical isomorphism $\phi: \IHcomb^*(\SigmaFan, \IQ) \xrightarrow{\sim} IH^*(X_\SigmaFan, \IQ)$ is the one established in \cite[Theorem 4.1]{BBFK}.
	\end{enumerate}
	
	\section{Combinatorial Intersection Cohomology and Cycle Classes}
	
	\subsection{The BBFK Framework}
	
	The combinatorial intersection cohomology $\IHcomb^*(\SigmaFan, \IQ)$ is defined as the hypercohomology of the minimal extension sheaf complex $\mathcal{IC}_{\SigmaFan}$ on the fan $\SigmaFan$, viewed as a finite topological space with the order topology \cite{BBFK}. Karu \cite{Karu} proved the Hard Lefschetz property for this combinatorial structure: for a projective fan, there exists a Lefschetz operator $L: \IHcomb^k(\SigmaFan) \to \IHcomb^{k+2}(\SigmaFan)$ inducing an isomorphism $L^k: \IHcomb^{n-k}(\SigmaFan) \xrightarrow{\sim} \IHcomb^{n+k}(\SigmaFan)$.
	
	\subsection{Combinatorial Cycle Classes and the Gysin Morphism}
	
	We define the combinatorial cycle class associated with a torus-invariant subvariety $V(\tau)$. Geometrically, $V(\tau) \hookrightarrow X_\SigmaFan$ is a closed subvariety. In the combinatorial setting, the fan of $V(\tau)$ is the quotient fan $\SigmaFan / \mathrm{span}(\tau)$.
	
	\begin{definition}[Combinatorial Cycle Class]
		\label{def:cycle_class}
		Let $\tau \in \SigmaFan$ be a cone of dimension $k$. We define the \emph{Combinatorial Cycle Class} of $V(\tau)$, denoted $[V(\tau)]_{\mathrm{comb}} \in \IHcomb^{2k}(\SigmaFan, \IQ)$, via the combinatorial Gysin morphism:
		\[
		(i_\tau)_*^{\mathrm{comb}}: \IHcomb^0(\SigmaFan / \mathrm{span}(\tau)) \to \IHcomb^{2k}(\SigmaFan).
		\]
		Specifically, $[V(\tau)]_{\mathrm{comb}} \coloneqq (i_\tau)_*^{\mathrm{comb}}(1)$, where $1$ is the fundamental class in degree~$0$ of the quotient fan.
	\end{definition}
	
	To make this rigorous, we rely on the derived-category formalism of sheaves on posets. In the BBFK framework \cite{BBFK}, minimal extension sheaves are constructed on the poset of cones. The closed embedding $V(\tau) \hookrightarrow X_\SigmaFan$ corresponds to the inclusion of the sub-poset $\Star(\tau) = \{ \sigma \in \SigmaFan \mid \tau \preceq \sigma \}$. The quotient fan $\SigmaFan / \mathrm{span}(\tau)$ is naturally identified with $\Star(\tau)$ as a finite poset with the order topology. The inclusion map $i_\tau: \Star(\tau) \hookrightarrow \SigmaFan$ is a closed embedding of finite topological spaces.
	
	By the functoriality of minimal extension sheaves with respect to closed embeddings \cite[Theorem 5.2]{BL}, there exists a canonical morphism in the derived category of sheaves on $\SigmaFan$:
	\[
	\alpha_\tau: (i_\tau)_* \mathcal{IC}_{\SigmaFan / \mathrm{span}(\tau)} \to \mathcal{IC}_{\SigmaFan}[2k].
	\]
	The degree shift $2k$ corresponds exactly to the codimension of $V(\tau)$ in $X_\SigmaFan$. Taking hypercohomology induces the combinatorial Gysin morphism $(i_\tau)_*^{\mathrm{comb}}$.
	
	Because the precise functorial alignment between the BBFK construction and the Bressler--Lunts proper pushforward remains subtle, we formalize the required compatibility as an explicit assumption.
	
	\begin{assumption}[Compatibility under BBFK--BL Functoriality]
		\label{prop:compatibility}
		Assume that the BBFK comparison isomorphism $\phi_\Sigma$ is functorial for the torus-equivariant closed embedding $V(\tau) \hookrightarrow X_\Sigma$, such that the following diagram commutes:
		\[
		\begin{CD}
			IH^0_{\mathrm{comb}}(\SigmaFan/\mathrm{span}(\tau)) @>{(i_\tau)_*^{\mathrm{comb}}}>> \IHcomb^{2k}(\SigmaFan) \\
			@V{\phi_\tau}VV @VV{\phi_\Sigma}V \\
			IH^0(V(\tau)) @>{(i_\tau)_*^{\mathrm{geom}}}>> IH^{2k}(X_\SigmaFan)
		\end{CD}
		\]
		In particular, $\phi_\Sigma\bigl([V(\tau)]_{\mathrm{comb}}\bigr) = cl_{IH}(V(\tau))$.
	\end{assumption}
	
	This assumption is natural because the geometric Gysin morphism $(i_\tau)_*^{\mathrm{geom}}$ is induced by the adjunction morphism of perverse sheaves for the closed embedding $i: V(\tau) \hookrightarrow X_\SigmaFan$, namely $i_* \mathcal{IC}_{V(\tau)} \to \mathcal{IC}_{X_\SigmaFan}[2k]$. Let $\pi: X_\SigmaFan \to \SigmaFan$ denote the continuous map from the toric variety to the finite fan-poset space sending each point to the cone indexing its torus orbit. The BBFK comparison isomorphism $\phi_\Sigma$ is induced by a quasi-isomorphism $\Psi_\Sigma: \mathcal{IC}_{\SigmaFan} \xrightarrow{\sim} R\pi_* \mathcal{IC}_{X_\SigmaFan}$ \cite[Theorem 4.1]{BBFK}. Bressler and Lunts \cite[Theorem 5.2]{BL} show that the combinatorial minimal extension sheaf functor is compatible with proper pushforwards for torus-equivariant closed embeddings. Under the quasi-isomorphism $\Psi$, the morphism $\alpha_\tau$ should correspond to the direct image under $\pi$ of the geometric adjunction morphism, yielding the commutative diagram.
	
	\section{The Generation Statement and Minkowski Weights}
	
	Since $\Hdg^k(X_{\SigmaFan}) = IH^{2k}(X_{\SigmaFan}, \IQ)$ \cite{Fieseler91}, the Hodge conjecture for projective toric varieties reduces to the following generation statement.
	
	\begin{conjecture}[Combinatorial Generation]
		\label{conj:generation}
		For a projective rational fan $\SigmaFan$, the combinatorial cycle classes span the rational even-degree intersection cohomology:
		\[
		\mathrm{span}_{\IQ}\left\{ [V(\tau)]_{\mathrm{comb}} \mid \tau \in \SigmaFan(k) \right\} = \IHcomb^{2k}(\SigmaFan, \IQ)
		\]
		for all $k \ge 0$.
	\end{conjecture}
	
	\subsection{Relation to Chow Homology and Operational Chow Cohomology}
	
	To understand this conjecture, we must carefully separate invariant cycles, Chow homology, operational Chow cohomology, and Minkowski weights.
	
	\begin{enumerate}
		\item \textbf{Invariant Cycles and Chow Homology.} The group $Z_{n-k}^T(X_\SigmaFan)$ of torus-invariant cycles is generated by the orbit closures $V(\tau)$ for $\tau \in \SigmaFan(k)$. These map naturally into the Chow homology group $\Chowh{n-k}(X_\SigmaFan)$.
		
		\item \textbf{Operational Chow Cohomology and Minkowski Weights.} Fulton and Sturmfels \cite{FS97} proved that the operational Chow cohomology ring $\Chowop{k}(X_\SigmaFan)$ of a complete toric variety is canonically isomorphic to the group of Minkowski weights $MW^k(\SigmaFan)$ of codimension $k$. A Minkowski weight is a function on the $k$-dimensional cones of $\SigmaFan$ satisfying a balancing condition.
		
		\item \textbf{Cycle Class Maps.} There is a natural cycle class map from Chow homology to intersection homology, and dually, a map from operational Chow cohomology to intersection cohomology:
		\[
		\gamma: \Chowop{k}(X_\SigmaFan) \to IH^{2k}(X_\SigmaFan, \IQ).
		\]
		This map is defined by capping an operational class with the fundamental class in intersection homology (or Borel-Moore homology).
	\end{enumerate}
	
	Operational Chow classes, represented by Minkowski weights, act on Chow homology. In special cases where the cap product map $\Chowop{k}(X_\SigmaFan) \to \Chowh{n-k}(X_\SigmaFan)$ is an isomorphism, one may identify certain invariant cycle classes with corresponding Minkowski weights. In general, however, we keep these two objects distinct. Conjecture~\ref{conj:generation} is intimately related to the surjectivity of the map $\gamma$ in even degrees, and the extent to which intersection cohomology classes can be represented by linear combinations of invariant cycles.
	
	\section{Proofs for Low Dimensions and Simplicial Fans}
	
	\subsection{Simplicial Fans}
	
	This case is well-understood but provides essential context.
	
	\begin{theorem}
		Conjecture~\ref{conj:generation} holds if $\SigmaFan$ is a simplicial fan.
	\end{theorem}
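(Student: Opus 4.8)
The plan is to reduce the simplicial case to the classical Chow-theoretic description of the cohomology of a quotient toric variety, and then to match the combinatorial cycle classes of Definition \ref{def:cycle_class} with the ordinary torus-invariant cycle classes. The crucial simplification is that a simplicial cone is smooth over $\IQ$: the variety $X_\SigmaFan$ attached to a simplicial fan has only finite abelian quotient singularities and is therefore a rational homology manifold, so that $IH^*(X_\SigmaFan, \IQ) \cong H^*(X_\SigmaFan, \IQ)$ on the geometric side.

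The first step is to record the analogous degeneration of the BBFK construction. Over a simplicial fan the minimal extension sheaf $\mathcal{E}$ is free of rank one over the structure sheaf $\mathcal{A}$, i.e. $\mathcal{E} \cong \mathcal{A}$, so the minimal intersection complex $\mathcal{IC}_{\SigmaFan}$ carries no higher local data and $\IHcomb^*(\SigmaFan, \IQ)$ is canonically the Jurkiewicz--Danilov quotient of the face ring of $\SigmaFan$, namely $H^*(X_\SigmaFan, \IQ)$. I would establish this collapse first, citing \cite{BBFK}, so that in what follows $\IHcomb^{2k}(\SigmaFan, \IQ)$ may be identified with $H^{2k}(X_\SigmaFan, \IQ)$ compatibly with $\phi$.

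The second step is the spanning statement at the level of ordinary cohomology. For any complete toric variety the torus orbits give a stratification by affine cells, and the closures $V(\tau)$ of the orbits generate the Chow groups; thus $A^k(X_\SigmaFan)_\IQ$ is spanned by the classes of the codimension-$k$ invariant subvarieties $V(\tau)$ with $\dim\tau = k$. For a simplicial complete fan the cycle class map $A^*(X_\SigmaFan)_\IQ \to H^{2*}(X_\SigmaFan, \IQ)$ is an isomorphism (Jurkiewicz--Danilov \cite{Danilov78}; see also \cite{Fulton}, \cite{CLS}), so the classes $[V(\tau)]$ with $\dim\tau = k$ already span $H^{2k}(X_\SigmaFan, \IQ)$.

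The final and most delicate step is to show that the combinatorially defined class $[V(\tau)]_{comb} = (i_\tau)_*([\Star(\tau)])$ corresponds, under the identifications above, to the geometric cycle class $[V(\tau)]$ up to a nonzero rational scalar. This is where the argument has real content: one must check that the combinatorial Gysin pushforward of the fundamental class of $\Star(\tau)$ agrees with the geometric proper pushforward under $\phi$. I expect this to be the main obstacle, since it requires tracing the normalization of $[\Star(\tau)]$ through the functoriality of the minimal extension sheaf and comparing it with the intersection-theoretic identity $[D_{\rho_1}]\cdots[D_{\rho_k}] = \mathrm{mult}(\tau)^{-1}[V(\tau)]$ valid in the simplicial case. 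Granting this compatibility, $Hdg^k_{comb}(\SigmaFan) = \mathrm{span}_\IQ\{[V(\tau)]_{comb}\}$ maps onto $\mathrm{span}_\IQ\{[V(\tau)]\} = H^{2k}(X_\SigmaFan, \IQ) \cong \IHcomb^{2k}(\SigmaFan, \IQ)$. Since $\Hdg^k(X_\SigmaFan) = IH^{2k}(X_\SigmaFan, \IQ)$ for projective toric varieties by \cite{Fieseler91}, this yields $\phi(Hdg^k_{comb}(\SigmaFan)) = \Hdg^k(X_\SigmaFan)$, which is exactly the assertion of Conjecture \ref{conj:main}.
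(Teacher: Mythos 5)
Your proposal is correct and follows the same overall strategy as the paper: reduce the simplicial case to ordinary cohomology via the orbifold observation $IH^*(X_\SigmaFan,\IQ)\cong H^*(X_\SigmaFan,\IQ)$, then invoke the Danilov--Jurkiewicz description to see that $H^{2k}$ is spanned by torus-invariant cycles. The difference lies in how you get the spanning statement and in what you choose to make explicit. The paper argues via the ring structure: the Stanley--Reisner presentation shows $H^*(X_\SigmaFan,\IQ)$ is generated in degree $2$ by divisor classes, and then simply asserts that the $[V(\tau)]_{comb}$ ``correspond to monomials in these divisor classes.'' You instead go through the Chow groups, using that $A^k(X_\SigmaFan)_\IQ$ is spanned by the $V(\tau)$ and that the cycle class map is an isomorphism in the simplicial complete case; this is a marginally more direct route to the same spanning statement and avoids needing the multiplicative structure. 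More importantly, you correctly isolate the step with real content --- the compatibility of $[V(\tau)]_{comb}=(i_\tau)_*([\Star(\tau)])$ with the geometric class $[V(\tau)]$ up to a nonzero scalar (the $\mathrm{mult}(\tau)^{-1}$ normalization) --- which the paper's proof elides entirely. Since you only ``grant'' this compatibility rather than prove it, your write-up is formally no more complete than the paper's, but it is more honest about where the gap sits; and since a nonzero rational scalar is harmless for a spanning claim over $\IQ$, the step is not one that would fail.
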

	\begin{proof}
		For a simplicial fan, $X_\SigmaFan$ has only quotient singularities (it is a rational homology orbifold), so $IH^*(X, \IQ) \cong H^*(X, \IQ)$ \cite[Corollary 1.2]{Fieseler91}. By the Danilov-Jurkiewicz theorem \cite{Danilov78} and its generalization to simplicial varieties \cite{Cox95, Fulton}, the rational cohomology ring is generated by the classes of torus-invariant divisors $D_\rho$. Since $H^*(X, \IQ)$ is generated by algebraic cycles, the cycle classes $[V(\tau)]_{\mathrm{comb}}$ span the entire space.
	\end{proof}
	
	\subsection{Varieties of Dimension $\le 2$}
	
	\begin{theorem}
		Conjecture~\ref{conj:generation} holds for any projective toric variety of dimension $n \le 2$.
	\end{theorem}
	\begin{proof}
		For $n=1$, $X \cong \IP^1$ is smooth. For $n=2$, let $X$ be a projective toric surface. The relevant degrees are $2k = 0, 2, 4$.
		\begin{itemize}
			\item \emph{Degree 0:} Spanned by the fundamental class $[X]_{\mathrm{comb}}$.
			\item \emph{Degree 2:} By Fieseler \cite[Thm.~1.1]{Fieseler91} and Goresky-MacPherson \cite[6.2]{GM80}, for a normal surface with isolated singularities, $IH^2(X, \IQ) \cong H^2(X, \IQ)$. The ordinary cohomology $H^2$ of a complete toric variety is generated by torus-invariant divisors \cite[Section 3.4]{Fulton}. Thus, the combinatorial cycle classes $[V(\rho)]_{\mathrm{comb}}$ span $\IHcomb^2(\Sigma)$.
			\item \emph{Degree 4:} By Poincar\'e duality, $\IHcomb^4(\Sigma) \cong \IQ$. Since $X$ is projective, it contains smooth points (e.g., vertices of the polytope). For a smooth 2-cone $\sigma$, the local intersection cohomology is trivial, and the cycle class $[V(\sigma)]_{\mathrm{comb}}$ maps to the non-zero generator of $\IHcomb^4$ \cite{BBFK}.
		\end{itemize}
	\end{proof}
	
	\subsection{Varieties of Dimension 3}
	
	To formalize the compatibility between intersection cohomology and operational Chow cohomology, we first record a lemma.
	
	\begin{lemma}
		\label{lem:chern_class_compatibility}
		Under the BBFK--BL functoriality assumptions of Assumption~\ref{prop:compatibility}, the Lefschetz operator $L$ of intersecting with an ample divisor class corresponds to the cup product with the first Chern class of the associated ample line bundle.
	\end{lemma}
	\begin{proof}
		This follows from the compatibility of the minimal extension sheaf functor with the first Chern class of line bundles, as established in \cite[Theorem 5.2]{BL}. In the rational projective toric case, the strictly convex support function defining $L$ corresponds to the ample line bundle $L$ on $X_\Sigma$, and the action of the support function on the minimal extension sheaf matches the cup product by $c_1(L)$ under the BBFK comparison.
	\end{proof}
	
	Before proceeding, we clarify the notation used for the algebraic cycle groups: $\Chowh{i}(X)$ denotes the Chow homology group of $i$-dimensional cycles, whereas $\Chowop{i}(X)$ denotes the operational Chow cohomology group of codimension-$i$ classes.
	
	\begin{theorem}
		\label{thm:dim3}
		The generation statement holds in dimension $n=3$, provided the BBFK comparison isomorphism is compatible with the torus-equivariant cycle classes and Lefschetz operator as in Assumption~\ref{prop:compatibility} and Lemma~\ref{lem:chern_class_compatibility}.
	\end{theorem}
	\begin{proof}
		Odd degree intersection cohomology vanishes for rational projective toric varieties \cite[Corollary 1.3]{Fieseler91}. We examine degrees 0, 2, 4, and 6.
		\begin{itemize}
			\item \emph{Degrees 0 and 6:} Spanned by $[X]_{\mathrm{comb}}$ and the class of a smooth point, respectively.
			\item \emph{Degree 2:} Fieseler \cite[Theorem 1.1]{Fieseler91} proves that for a projective toric variety $X$, the natural cycle class map $\Chowh{n-1}(X)_\IQ \to IH^2(X, \IQ)$ from the Chow homology group of divisor cycles (codimension-$1$ cycles) is surjective. Since $\Chowh{n-1}(X)_\IQ = \Chowh{2}(X)_\IQ$ is generated by invariant Weil divisors $D_\rho$ \cite{Fulton}, $\IHcomb^2(\Sigma)$ is spanned by $[V(\rho)]_{\mathrm{comb}}$.
			\item \emph{Degree 4:} We use Karu's Hard Lefschetz Theorem \cite{Karu}. Let $L$ be the operator of intersecting with an ample divisor class. Karu proves that $L: \IHcomb^2(\Sigma) \to \IHcomb^4(\Sigma)$ is an isomorphism. By Lemma~\ref{lem:chern_class_compatibility}, this corresponds to the cup product with the first Chern class of the ample line bundle. The ample Cartier class $c_1(L) \in \Chowop{1}(X)$ acts operationally on the Chow homology divisor cycles in $\Chowh{2}(X)$, sending invariant divisor cycles to linear combinations of invariant curve cycles in $\Chowh{1}(X)$ \cite[Section 2]{FS97}. Since $L$ is an isomorphism, $IH^4(X)$ is spanned by the images of these intersections, which are precisely linear combinations of the combinatorial curve classes $[V(\tau)]_{\mathrm{comb}}$ where $\dim(\tau)=2$.
		\end{itemize}
	\end{proof}
	
	\section{A Non-Simplicial Example in Dimension 3}
	
	To illustrate the framework beyond the simplicial case, we examine a projective non-simplicial toric $3$-fold. The Betti numbers and the spanning property can be verified directly via the toric $h$-vector and Fieseler's surjectivity theorem.
	
	\subsection{The Hexagonal Pyramid Fan}
	
	Let $P$ be the hexagonal pyramid in $\IR^3$: the convex hull of the six lattice points
	\[
	v_1=(1,0,0),\; v_2=(0,1,0),\; v_3=(-1,1,0),\; v_4=(-1,0,0),\; v_5=(0,-1,0),\; v_6=(1,-1,0)
	\]
	and the apex $v_7=(0,0,1)$. The normal fan $\Sigma = \Sigma_P \subset \IR^3$ has one ray for each facet of $P$:
	\begin{align*}
		\rho_b &= (0,0,-1) \quad \text{(hexagonal base)}, \\
		\rho_1 &= (1,1,1),\quad \rho_2 = (0,1,1),\quad \rho_3 = (-1,0,1), \\
		\rho_4 &= (-1,-1,1),\quad \rho_5 = (0,-1,1),\quad \rho_6 = (1,0,1)
	\end{align*}
	(the six triangular side faces). There are seven maximal cones, one for each vertex of $P$:
	\[
	\sigma_j = \mathrm{cone}(\rho_b, \rho_j, \rho_{j-1}) \quad (j=1,\dots,6),
	\]
	corresponding to the six base vertices (where $\rho_0$ is interpreted cyclically as the side ray $\rho_6$), and
	\[
	\sigma_7 = \mathrm{cone}(\rho_1, \rho_2, \rho_3, \rho_4, \rho_5, \rho_6),
	\]
	corresponding to the apex $v_7$. The cones $\sigma_1,\dots,\sigma_6$ are simplicial (each has $\det = 1$, so $X_\Sigma$ is smooth along the corresponding orbits). The cone $\sigma_7$ is \emph{non-simplicial}: it has six rays in $\IR^3$ and corresponds to the unique isolated singular point of $X_\Sigma$.
	
	\subsection{IH Betti Numbers via the Toric $h$-Vector}
	
	For a projective toric variety associated to a rational polytope $P$, the intersection-cohomology Betti numbers are exactly given by the generalized (toric) $h$-vector introduced by Stanley \cite{Stanley87}:
	\[
	\dim IH^{2k}(X_P, \IQ) = h_k(P).
	\]
	For a $3$-dimensional polytope $P$, the toric $h$-vector satisfies:
	\[
	h_0 = 1,\qquad h_1 = f_2(P) - 3,\qquad h_2 = h_1,\qquad h_3 = 1,
	\]
	where $f_2(P)$ is the number of facets of $P$ \cite[Theorem 2.3]{Stanley87}. Since the hexagonal pyramid has $f_2(P) = 7$ facets (one hexagonal base and six triangular sides), we compute directly:
	\[
	h_{\mathrm{toric}}(P) = (1, 4, 4, 1).
	\]
	Therefore, the IH Betti numbers are uniquely determined independently of any cycle map:
	\[
	\dim IH^0 = 1,\quad \dim IH^2 = 4,\quad \dim IH^4 = 4,\quad \dim IH^6 = 1.
	\]
	
	By Fieseler \cite[Theorem 1.1]{Fieseler91}, the cycle class map $\Chowh{2}(X_\Sigma)_\IQ \to IH^2(X_\Sigma, \IQ)$ from the Chow homology group of divisor cycles is surjective. For a complete toric variety, the rank of the Weil divisor class group $\Chowh{2}(X_\Sigma)_\IQ$ equals the number of rays minus the dimension of the lattice:
	\[
	\operatorname{rank} \Chowh{2}(X_\Sigma)_\IQ = 7 - 3 = 4.
	\]
	Since $\operatorname{rank} \Chowh{2}(X_\Sigma)_\IQ = 4$ and $\dim IH^2(X_\Sigma, \IQ) = 4$, Fieseler's surjection is in fact an isomorphism for this example.
	
	\subsection{Divisor Relations and Basis for $\Chowh{2}(X_\Sigma)_\IQ$}
	
	The seven torus-invariant Weil divisors $D_b, D_1, \dots, D_6$ (one per ray) satisfy three linear equivalence relations in the Chow homology group $\Chowh{2}(X_\Sigma)_\IQ$, one for each lattice character $e_1, e_2, e_3 \in M = \IZ^3$:
	\begin{align*}
		\langle e_1, \cdot \rangle: &\quad D_1 - D_3 - D_4 + D_6 = 0, \\
		\langle e_2, \cdot \rangle: &\quad D_1 + D_2 - D_4 - D_5 = 0, \\
		\langle e_3, \cdot \rangle: &\quad -D_b + D_1 + D_2 + D_3 + D_4 + D_5 + D_6 = 0.
	\end{align*}
	Solving for $D_5$, $D_6$, $D_b$ in terms of $\{D_1, D_2, D_3, D_4\}$:
	\[
	D_5 = D_1 + D_2 - D_4, \quad D_6 = D_3 + D_4 - D_1, \quad D_b = D_1 + 2D_2 + 2D_3 + D_4.
	\]
	Hence $\Chowh{2}(X_\Sigma)_\IQ \cong \IQ^4$ with basis $\{[D_1],[D_2],[D_3],[D_4]\}$, confirming the rank computation above.
	
	\subsection{Illustration of the Spanning Property}
	
	\textbf{Degree 2.} As established by Fieseler's isomorphism, the four divisor-cycle basis classes $[D_1]_{\mathrm{comb}}, [D_2]_{\mathrm{comb}}, [D_3]_{\mathrm{comb}}, [D_4]_{\mathrm{comb}}$ precisely span the $4$-dimensional space $\IHcomb^2(\Sigma, \IQ)$.
	
	\textbf{Degree 4.} Theorem~\ref{thm:dim3} establishes the generation statement for all projective toric $3$-folds. This example serves as an illustration of the general mechanism: the $4$-dimensional space $\IHcomb^4(\Sigma, \IQ)$ is spanned by the twelve combinatorial curve classes $[V(\tau_{ij})]_{\mathrm{comb}}$ corresponding to the twelve edges of $P$. The ample operational class $c_1(L) \in \Chowop{1}(X_\Sigma)$ acts on the four divisor-cycle classes in $\Chowh{2}(X_\Sigma)$, producing linear combinations of curve classes in $\Chowh{1}(X_\Sigma)$. The Hard Lefschetz isomorphism $L: \IHcomb^2(\Sigma) \xrightarrow{\sim} \IHcomb^4(\Sigma)$ maps the four divisor basis classes to four linearly independent elements of $\IHcomb^4(\Sigma, \IQ)$, confirming that these curve classes span the intersection cohomology in degree $4$.
	
	\section{Conclusion}
	
	We have formulated a precise linear-generation statement for the combinatorial cycle classes within the BBFK intersection cohomology theory. Under the BBFK--BL functoriality assumptions stated in Assumption~\ref{prop:compatibility}, the proposed Gysin construction is compatible with geometric cycle classes. Leveraging operational Chow cohomology and Chow homology, we proved the conjecture for dimensions up to~$3$ and for simplicial fans. The non-simplicial example of the hexagonal pyramid toric variety illustrates the framework, with the IH Betti numbers $1,4,4,1$ derived directly from Stanley's toric $h$-vector formula. The general case for dimension $n \ge 4$ remains an open problem, closely tied to the surjectivity of the map from Minkowski weights to intersection cohomology.
	
	\section*{Statements and Declarations}
	
	\noindent\textbf{Competing interests.}
	The authors declare that they have no competing interests.
	
	\noindent\textbf{Funding.}
	No funding was received for this work.
	
	\noindent\textbf{Data availability.}
	Data sharing is not applicable to this article as no datasets were generated or analyzed.
	

\end{document}